\newcommand{\matt}[1]{\left[\begin{smallmatrix}
   #1
  \end{smallmatrix}\right]}
\newcommand{\mat}[1]{\begin{bmatrix}
   #1
  \end{bmatrix}}
\newcommand{\HH}{\mathbb H}
\newcommand{\CC}{\mathbb C}
\newcommand{\RR}{\mathbb R}
\newtheorem{theorem}{Theorem}
\newtheorem{lemma}{Lemma}
\theoremstyle{definition}
\theoremstyle{remark}
\begin{document}
\title{Roth's solvability criteria for the matrix equations
${AX-\widehat XB=C}$ and ${X-A\widehat{X}B=C}$ over the skew field of quaternions with an involutive automorphism $q\mapsto \hat q$
\thanks{Published in  Linear Algebra Appl. 510 (2016) 246--258.}}

\date{}

\author{Vyacheslav Futorny\thanks{Department of Mathematics, University of S\~ao Paulo, Brazil, {futorny@ime.usp.br}}
\and
Tetiana Klymchuk
\thanks{Universitat Polit\`{e}cnica de Catalunya, Barcelona, Spain, {tetiana.klymchuk@upc.edu}}
\and
Vladimir V.
Sergeichuk\thanks{Institute of Mathematics,
Kiev, Ukraine, {sergeich@imath.kiev.ua}}
}

\maketitle

\begin{abstract}
The matrix equation $AX-XB=C$  has a solution if and only if the matrices $\matt{A&C\\0&B}$ and $\matt{A &0\\0 & B}$ are similar. This criterion was proved over a field by W.E.~Roth (1952) and over the skew field of quaternions by Huang Liping (1996). H.K.~Wimmer (1988) proved that the matrix equation $X-AXB=C$ over a field has a solution if and only if the matrices $\matt{A & C \\0 &I}$ and $\matt{I & 0 \\0 & B}$ are simultaneously equivalent to $\matt{A & 0 \\0 & I}$ and $\matt{I & 0 \\
0 & B}$.
We extend these criteria to the matrix equations $AX-\widehat XB=C$ and $X-A\widehat XB=C$ over the skew field of quaternions with a fixed involutive automorphism $q\mapsto \hat q$.

{\it AMS classification:} 15A24;
15B33

{\it Keywords:} Quaternion matrix equations; Sylvester matrix equations;
Roth's criteria for solvability
\end{abstract}
\section{Introduction}

Let $\HH$ be the skew field of quaternions with a fixed involutive automorphism $h\mapsto \hat h $; that is, a bijection $\HH\to\HH$ (possibly, the identity) such that
\[
\widehat{h+k}=\hat{h}+\hat{k},\quad
\widehat{hk}=\hat{h}\hat{k},\quad \Hat{\Hat h}=h
\]
for all $h,k\in \HH$. If $H=[h_{ij}]$ is a quaternion matrix, then we write $\widehat H:=[\hat h_{ij}]$. We prove two criteria of solvability of quaternion matrix equations (see Theorems \ref{lih} and \ref{iii2}):
\begin{itemize}
  \item
$AX-\widehat XB=C$ has a solution if and only if
$\widehat S^{-1}\matt{A & C\\0 & B}S=\matt{A & 0 \\0 & B}$ for some nonsingular $S$;
  \item
$X-A\widehat XB=C$ has a solution if and only if
$\matt{A&C\\0&I}R=\widehat{S}\matt{A&0\\0&I}$ and
$\matt{I&0\\0&B}R=S\matt{I&0\\0&B}$ for some nonsingular $S$ and $R$.
\end{itemize}
In order to prove them, we represent these quaternion matrix equations by complex matrix equations using the injective homomorphism \begin{equation}\label{pmt}
a+bi+cj+dk\mapsto\mat{a+bi&c+di\\-c+di&a-bi}
\end{equation}
of $\HH$ to the matrix ring $\CC^{2\times 2}$, and then we use known criteria of solvability of complex matrix equations.

In Sections \ref{sse} and \ref{ssd}, we give a brief exposition of some results on Roth theorems and their generalizations.

\subsection{Roth's theorems over a field}
\label{sse}

Roth \cite{roth}
proved two criteria of solvability of matrix equations over a field:
\begin{equation}\label{roh1}
\parbox[c]{0.8\textwidth}{$AX-YB=C$ has a solution if and only if
$\left[\begin{smallmatrix}
  A & C \\
  0 & B \\
\end{smallmatrix}\right]$ and
$\left[\begin{smallmatrix}
  A & 0 \\
  0 & B \\
\end{smallmatrix}\right]
$ are equivalent (i.e., have equal rank),
}
\end{equation}
and
\begin{equation}\label{roh2}
\parbox[c]{0.8\textwidth}{$AX-XB=C$ has a solution if and only if
$\matt{A & C\\0 & B}$ and
$\matt{A & 0 \\0 & B}$ are similar.
}
\end{equation}
The criterion \eqref{roh2} is known as
\emph{Roth's removal rule}.

Wimmer \cite{wimm} (see also Yusun \cite{yus}) proved that
\begin{equation}\label{nrg}
\parbox[c]{0.8\textwidth}{$X-AXB=C$ over a field has a solution if and only if
$\matt{A & C \\0 &I}R=S\matt{A & 0 \\0 & I}$ and $\matt{I & 0 \\0 & B}R=S\matt{I & 0 \\0 & B}$
for some nonsingular $S$ and $R$.
}
\end{equation}

The necessities in \eqref{roh1}--\eqref{nrg} are clear; for example, if $X$ is a solution of $AX-XB=C$, then
\begin{equation*}\label{gfl}
\begin{bmatrix}
    I&-X\\0&I
  \end{bmatrix}
  \begin{bmatrix}
    A&0\\0&B
  \end{bmatrix}
  \begin{bmatrix}
    I&X\\0&I
  \end{bmatrix} =
  \begin{bmatrix}
    A&AX-XB\\0&B
  \end{bmatrix}
  = \begin{bmatrix}
    A&C\\0&B
  \end{bmatrix};
\end{equation*}
see also \eqref{dss}.
The sufficiencies  in \eqref{roh1}--\eqref{nrg}  are surprising: if
$\left[\begin{smallmatrix}
  A & C \\
  0 & B \\
\end{smallmatrix}\right]$ and
$\left[\begin{smallmatrix}
  A & 0 \\
  0 & B \\
\end{smallmatrix}\right]
$ are equivalent or similar, then the transforming matrices can be taken to be upper triangular.

W.E. Roth proved \eqref{roh1} and \eqref{roh2} using canonical forms.  Flanders and Wimmer \cite{fla} gave  invariant proofs, which are presented in the books \cite[Theorem 44.3]{pra} and \cite[Section 12.5]{lan}.  Other proofs of Roth's theorems were given by R.~Feinberg (1975), J.K.~Baksalary and R.~Kala (1979),  R.~Hartwig (1983), Jiong Sheng Li (1984)  A.J.B.~Ward (1993, 1999), F.~Gerrish and A.J.B~Ward (1998, 2000), Yu.A~Al'pin and
S.N.~Il'in (2006), and M.~Lin and H.K. Harald (2011). Guralnick \cite{gur,gur1} and  Gustafson \cite{gus} extended Roth's theorems to matrices and sets of matrices over commutative rings. Rosenblum \cite{ros} showed that Roth's  theorem  is not in general valid  for bounded  operators on a Hilbert space, but it is valid  for selfadjoint operators. Fuhrmann and Helmke \cite{f-h} pointed out that Roth's theorem \eqref{roh2}  is also about existence of complementary subspaces.

Statements \eqref{roh1}--\eqref{nrg} are the most elegant criteria for existence of solutions of $AX-YB=C$, $AX-XB=C$, and $X-AXB=C$, though one can write each of these matrix equations as a system of linear equations $Mx=c$ and formulate criteria for existence and uniqueness of solutions via $M$ and $c$; see \cite[Section 12.3]{lan} and \cite{kuc}.  However, the obtained conditions are not convenient since the system of linear equations $Mx=c$ is large and can be ill-conditioned.
Note that the complex matrix equation $AX-XB=C$ has a unique solution if and only if $A$ and $B$ have no common eigenvalues and the complex matrix equation $X-AXB=C$ has a unique solution if and only if $\lambda \mu \ne 1$ for all eigenvalues $\lambda$ of $A$ and $\mu $ of $B$; see \cite[Section 12.3]{lan}.

Dmytryshyn and K{\aa}gstr{\"o}m \cite[Theorem 6.1]{dmy} extended Roth's criterions to the systems
\[A_iX_{i'}M_i-N_iX_{i''}^{\varepsilon_i} B_i=C_i,\qquad
i=1,\dots,s\]
of matrix equations with unknown matrices  $X_1,\dots,X_t$ over a field $\mathbb F$ of characteristic not 2, in which
all $i',i''\in\{1,\dots,t\}$ and each $X_{i''}^{\varepsilon_i}$ is either $X_{i''}$, or $X_{i''}^T$, or  $X^*_{i''}$ (if $\mathbb F=\mathbb C$).

Bevis,  Hall, and Hartwig \cite{bev,bev1} proved that
\begin{equation}\label{rod}
\parbox[c]{0.8\textwidth}{$AX-\bar XB=C$ over $\CC$ (in which $\bar X$ is the complex conjugate of $X$) has a solution if and only if $\bar S^{-1}\matt{A & C \\0 & B}S=\matt{A & 0 \\0 & B}$ for some nonsingular $S$.
}
\end{equation}
The theory of equations of the form $AXM-N\bar XB=C$ is summarized in Wu and Zhang's book \cite{Wu}.

\subsection{Quaternion matrix equations}
\label{ssd}

The quaternion matrix equations $AX-XB=C$ and $X+AXB=C$ are studied in L. Rodman's book \cite[Section 5.11]{rod}.
Solutions of $AX-XB=C$ are analyzed by Bolotnikov \cite{bol}.
Liping \cite{lip}\footnote{Huang Liping also publishes as Liping  Huang, Li-Ping Huang, and Li Ping Huang.} studies the quaternion matrix equation $AXB+CXD=E$.

Liping \cite[Corollary 3]{lip} proved that \begin{equation}\label{rong}
\parbox[c]{0.8\textwidth}{$AX-XB=C$ over $\HH$ has a solution if and only if
$\left[\begin{smallmatrix}
  A & C \\
  0 & B \\
\end{smallmatrix}\right]$ and
$\left[\begin{smallmatrix}
  A & 0 \\
  0 & B \\
\end{smallmatrix}\right]
$ are similar.
}
\end{equation}
Our proofs of Theorems \ref{lih} and \ref{iii2} are close to his proof;
we extend \eqref{roh2}--\eqref{rong} to the matrix equations
$AX-\widehat XB=C$ and $X-A\widehat XB=C$
over $\HH$ with a fixed involutive automorphism $q\mapsto \hat q$, which can be the identity.

Most authors study matrix equations over $\HH$ with the identity automorphism or the involutive automorphism
\begin{equation}\label{sxw}
h=a+bi+cj+dk\quad\mapsto\quad \tilde h:=j^{-1}hj=a-bi+cj-dk.
\end{equation}
Jiang and Ling \cite[Theorem 3.2]{jia2} proved that
$AX-\widetilde XB=C$ over $\HH$ with the automorphism \eqref{sxw} has a solution if and only if
$\left[\begin{smallmatrix}
  A_{\sigma } & C_{\sigma } \\
  0 & B_{\sigma } \\
\end{smallmatrix}\right]$ and
$\left[\begin{smallmatrix}
  A_{\sigma } & 0 \\
  0 & B_{\sigma } \\
\end{smallmatrix}\right]
$ are similar over $\RR$,
where
\[
A_{\sigma }:=\begin{bmatrix}
A_1 & A_2&-A_3&A_4 \\
A_2 & -A_1&-A_4&-A_3\\
A_3 & -A_4&A_1&A_2 \\
A_4 & A_3&A_2&-A_1 \\
\end{bmatrix}\in\RR^{4n\times 4n}
\]
is the real representation of a quaternion matrix $A=A_1+A_2i+A_3j+A_4k\in\HH^{n\times n}$. Jiang and Wei \cite{jia1, jia} obtained expressions for exact solutions of $X - AXB = C$ and $X-A \widetilde XB=C$ in terms of the coefficients of characteristic polynomials; explicit solutions of these equations were also obtained by Song, Chen, and Liu \cite{son1,son}.
Yuan and Liao \cite{yua} studied $X - A \widetilde XB = C$ using the complex representation of quaternion matrices.

\subsection{Involutive automorphisms of $\HH$}

Klimchuk and Sergeichuk \cite[Lemma 1]{kli1} proved the following lemma.

\begin{lemma}\label{lih2}
Each nonidentical involutive
automorphism of\/ $\HH$ has the form \begin{equation}\label{cwo}
h=a+bi+cj+dk\quad \mapsto\quad i^{-1}hi=a+bi-cj-dk
\end{equation}
in a suitable set of orthogonal
imaginary units $i,j,k\in\HH$.
\end{lemma}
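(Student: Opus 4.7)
The plan is to reduce the classification to a single conjugating element and then make a convenient choice of basis.

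First, I would invoke the standard fact that every $\RR$-algebra automorphism $\varphi$ of $\HH$ is inner: $\varphi(h)=q^{-1}hq$ for some nonzero $q\in\HH$. This is Skolem--Noether applied to the central simple $\RR$-algebra $\HH$; equivalently, $\varphi$ fixes the center $\RR$ and acts on the $3$-dimensional space of pure imaginaries as an element of $SO(3)$, and every rotation of that space is realized by a conjugation in $\HH$.

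Second, I would exploit $\varphi^2=\mathrm{id}$. Since $q^{-2}hq^2=h$ for all $h\in\HH$, the element $q^2$ must lie in the center $\RR$. Writing $q=a+bi+cj+dk$ one computes
\[
q^2=(a^2-b^2-c^2-d^2)+2a(bi+cj+dk),
\]
so $q^2\in\RR$ forces $a(bi+cj+dk)=0$: either $q\in\RR$ (giving $\varphi=\mathrm{id}$) or $q$ is purely imaginary. Under the nonidentity assumption $q$ is purely imaginary; after rescaling $q$ by a nonzero real (which leaves $h\mapsto q^{-1}hq$ unchanged) we may assume $|q|=1$, whence $q^2=-1$ and $q^{-1}=-q$. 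Call this element $i'$.

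Third, I would construct the required orthogonal imaginary units. The real orthogonal complement of $\RR\oplus\RR i'$ in $\HH$ is a $2$-dimensional subspace consisting of pure imaginaries orthogonal to $i'$; pick a unit vector $j'$ in it and set $k':=i'j'$. A direct check shows that $\{i',j',k'\}$ is a triple of mutually orthogonal pure imaginary units obeying $i'j'=k'$, $j'k'=i'$, $k'i'=j'$. Computing $\varphi$ in the basis $\{1,i',j',k'\}$ then yields $\varphi(i')=i'$, $\varphi(j')=-i'j'i'=-k'i'=-j'$, and $\varphi(k')=-i'k'i'=j'i'=-k'$, which is exactly the formula \eqref{cwo}.

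The only non-formal ingredient is the opening reduction, that every automorphism of $\HH$ is inner; once that is in hand the remainder is a short algebraic calculation. The mildly delicate step is verifying that $\{i',j',k'\}$ satisfies the standard quaternion relations, but this is equivalent to saying that multiplication by a unit pure imaginary rotates the plane of pure imaginaries orthogonal to it by a right angle, which is standard.
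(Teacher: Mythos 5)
Your proof is correct and follows the standard route: the paper itself does not prove Lemma~\ref{lih2} but cites \cite[Lemma 1]{kli1}, whose argument runs along the same lines (every automorphism of $\HH$ is inner, involutivity forces the conjugating element $q$ to satisfy $q^2\in\RR$, hence $q$ is real or purely imaginary, and a purely imaginary unit is then completed to an orthogonal triple $i',j',k'$ on which conjugation acts as in \eqref{cwo}). The only point worth making explicit is that the paper's definition of an involutive automorphism assumes only a ring automorphism, so before invoking Skolem--Noether you should observe that any ring automorphism of $\HH$ is automatically $\RR$-linear, since it preserves the center $\RR$ and the identity is the only field automorphism of $\RR$.
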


Two advantages of the automorphism \eqref{cwo} over \eqref{sxw}, which will be used in the next sections, are indicated in \cite{kli1}:
\begin{itemize}
  \item
If $h\mapsto \hat h$ is \eqref{cwo} and $h\in\HH$ is represented in the form $h=u+vj$ with $u,v\in\CC$, then $\hat h=u-vj$; compare with $\tilde h=\bar u+\bar vj$. By Lemma \ref{lih2}, each involutive automorphism has the form
\begin{equation}\label{kli}
a+bi+cj+dk\quad\mapsto\quad a+bi+\varepsilon (cj+dk)
\end{equation}
for some $\varepsilon \in\{1,-1\}$,
up to reselection of the orthogonal
imaginary units $i,j,k$.
This admits to study equations
over $\HH$ with the identity automorphism and with \eqref{cwo} simultaneously; see \cite[Section 3]{kli1} and the proofs of Theorems \ref{lih} and \ref{iii2}.

  \item
If $h\mapsto\hat h$ is \eqref{kli}, then
\begin{equation}\label{nvn}
\parbox[c]{0.8\textwidth}{each square quaternion matrix is ${\wedge}$-similar to a complex matrix (two quaternion matrices $A$ and $B$ are said to be \emph{${\wedge}$-similar} if $\widehat S^{-1}AS=B$ for some nonsingular quaternion matrix $S$).
}
\end{equation}
\end{itemize}
A canonical form of a quaternion matrix \begin{itemize}
  \item[(a)] under similarity was given by Wiegmann \cite{wie} (see also \cite{zha} and \cite[Theorem 5.5.3]{rod}),
  \item[(b)] under ${\wedge}$-similarity with $h\mapsto \hat h$ defined in \eqref{cwo} was given in \cite[Theorem 3]{kli1},
  \item[(c)] under ${\wedge}$-similarity with $h\mapsto \hat h$ defined in \eqref{sxw} was given by Liping \cite[Theorem 3]{hua}.
\end{itemize}
The canonical forms (a) and (b) (as distinct from (c)) are complex matrices, which ensures \eqref{nvn}.

\section{Roth's theorem for the quaternion matrix equation $AX-\widehat XB=C$}

The following theorem generalizes \eqref{roh2} and \eqref{rong}.

\begin{theorem}\label{lih}
Let $h\mapsto \hat h$ be an involutive automorphism of the skew field of quaternions.
The quaternion matrix equation
${AX-\widehat XB=C}$ has a solution if and only if
\begin{equation}\label{tph}
\widehat S^{-1}\begin{bmatrix}
    A&C\\0&B
  \end{bmatrix}S= \begin{bmatrix}
    A&0\\0&B
  \end{bmatrix}
\end{equation}
for some nonsingular $S$.
\end{theorem}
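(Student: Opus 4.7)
My plan is to verify necessity by a direct computation and reduce sufficiency to the classical complex Roth criterion via the embedding~\eqref{pmt}, following the strategy of Liping for~\eqref{rong}. For \emph{necessity}, if $X$ solves $AX - \widehat{X}B = C$, then $S := \matt{I & -X \\ 0 & I}$ gives $\widehat{S}^{-1} \matt{A & C \\ 0 & B} S = \matt{A & 0 \\ 0 & B}$, as a block computation confirms.

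For \emph{sufficiency}, I first use Lemma~\ref{lih2} and~\eqref{kli} to assume $\widehat{h} = u + \varepsilon v j$ whenever $h = u + vj$ with $u,v \in \CC$, for a fixed $\varepsilon \in \{1,-1\}$---handling the identity and non-identical involution together. Writing $X = X_1 + X_2 j \in \HH^{m \times n}$ and $\phi(X) = \matt{X_1 & X_2 \\ -\bar{X}_2 & \bar{X}_1}$ in the natural block form, a short calculation gives $\phi(\widehat{X}) = E_m \phi(X) E_n$ with $E_k := \diag(I_k, \varepsilon I_k)$. Applying $\phi$ to $AX - \widehat{X}B = C$ and left-multiplying by $E_m$ (using $E_k^2 = I$) converts the quaternion equation into the complex Sylvester equation
\[
(E_m \phi(A))\, Y - Y\, (E_n \phi(B)) = E_m \phi(C), \qquad Y = \phi(X).
\]

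Next, I apply $\phi$ to the hypothesis~\eqref{tph} and use $\phi(\widehat{S}) = E_{m+n} \phi(S) E_{m+n}$ to obtain $\phi(S)^{-1} E_{m+n} \phi(M)\, \phi(S) = E_{m+n} \phi(D)$, where $M := \matt{A & C \\ 0 & B}$ and $D := \matt{A & 0 \\ 0 & B}$. A row/column permutation grouping the $A$- and $B$-blocks puts $E_{m+n}\phi(M)$ and $E_{m+n}\phi(D)$ into the forms $\matt{E_m\phi(A) & E_m\phi(C) \\ 0 & E_n\phi(B)}$ and $\matt{E_m\phi(A) & 0 \\ 0 & E_n\phi(B)}$, so their similarity over $\CC$ and Roth's classical criterion~\eqref{roh2} together yield a complex solution $Y$ of the Sylvester equation above.

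The hard part is the \emph{descent}: $Y$ need not lie in $\phi(\HH^{m \times n})$, which is the fixed-point set of the conjugate-linear involution $\sigma(Y) := J_m \bar{Y} J_n^{-1}$ with $J_k := \matt{0 & -I_k \\ I_k & 0}$. I will use the key identity $J_k E_k J_k^{-1} = \varepsilon E_k$ (trivial for $\varepsilon = 1$; from $E_k J_k = -J_k E_k$ for $\varepsilon = -1$) together with the image characterization $\overline{\phi(N)} = J^{-1} \phi(N) J$ to check that $\sigma$ carries solutions of the Sylvester equation to solutions. The symmetrization $Y' := \tfrac{1}{2}(Y + \sigma(Y))$ is then a $\sigma$-fixed solution, so $Y' = \phi(X)$ for a unique $X \in \HH^{m \times n}$, and unwinding $\phi$ shows that $X$ satisfies $AX - \widehat{X}B = C$.
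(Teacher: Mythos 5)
Your proposal is correct, and while it shares the paper's overall strategy (normalize the involution via Lemma \ref{lih2} and \eqref{kli}, pass to complex matrices through \eqref{pmt}, multiply by $\diag(I,\varepsilon I)$, permute blocks, invoke the classical Roth criterion \eqref{roh2}), the sufficiency half takes a genuinely different route in two respects. First, the paper argues in two cases: it proves the claim when $A$ and $B$ are complex, and then reduces general quaternion $A,B$ to that case by choosing $P,Q$ with $\widehat P^{-1}AP$ and $\widehat QBQ^{-1}$ complex, which requires the nontrivial canonical-form fact \eqref{nvn} (${\wedge}$-similarity of every square quaternion matrix to a complex matrix, imported from \cite{kli1}). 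You treat quaternion $A,B,C$ uniformly and never need \eqref{nvn}: the identity $\phi(\widehat X)=E_m\phi(X)E_n$, the similarity of $E_{m+n}\phi(M)$ and $E_{m+n}\phi(D)$, and the block permutation all go through with $\phi(A),\phi(B),\phi(C)$ as $2\times2$-block entries (I checked the permuted forms $\matt{E_m\phi(A)&E_m\phi(C)\\0&E_n\phi(B)}$ and $\matt{E_m\phi(A)&0\\0&E_n\phi(B)}$; they are right). Second, the descent differs: the paper takes the Roth solution $\matt{Z_1&Z_2\\Z_3&Z_4}$, keeps only $Z_1,Z_2$, and rebuilds the structured solution $\matt{Z_1&Z_2\\-\bar Z_2&\bar Z_1}$ by conjugating the two extracted equations---a step that again uses that $A,B$ are complex---whereas you symmetrize an arbitrary complex solution by the conjugate-linear involution $\sigma(Y)=J_m\bar Y J_n^{-1}$ whose fixed set is exactly $\phi(\HH^{m\times n})$. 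Your cited identities $J_kE_kJ_k^{-1}=\varepsilon E_k$ and $\overline{\phi(N)}=J_m^{-1}\phi(N)J_n$ do suffice: a short computation gives $E_m\phi(A)\sigma(Y)-\sigma(Y)E_n\phi(B)=\varepsilon J_m\overline{E_m\phi(C)}J_n^{-1}=E_m\phi(C)$, and since $\sigma^2=\mathrm{id}$ and the solution set is an affine subspace, $\tfrac12(Y+\sigma(Y))$ is a $\sigma$-fixed solution, so the descent is legitimate. The trade-off: the paper's route yields the quaternion solution very explicitly ($Z_1+Z_2j$ read off from two blocks) at the price of the case split and the ${\wedge}$-similarity canonical form; your route is self-contained modulo complex Roth and Lemma \ref{lih2}, and makes visible that the paper's Case 1 conjugation trick is just the averaging over the quaternionic structure done by hand. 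Your necessity argument coincides with the paper's.
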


\begin{proof}
$\Longrightarrow $. \ If $X$ is a solution of ${AX-\widehat XB=C}$, then \eqref{tph} holds with $S=\matt{ I&-X\\0&I}$ since
\[
\begin{bmatrix}
    I&-\widehat X\\0&I
  \end{bmatrix}
  \begin{bmatrix}
    A&0\\0&B
  \end{bmatrix}
  \begin{bmatrix}
    I&X\\0&I
  \end{bmatrix} =
  \begin{bmatrix}
    A&AX-\widehat XB\\0&B
  \end{bmatrix}
  = \begin{bmatrix}
    A&C\\0&B
  \end{bmatrix}.
\]

$ \Longleftarrow$. \
Due to Lemma \ref{lih2}, we suppose that the automorphism $h\mapsto\hat h$ is of the form \eqref{kli}. Let \eqref{tph} hold.
\medskip

\emph{Case 1: $A$ and $B$ are complex matrices}.
Write
\[
  C=C_1+C_2j,\quad
  S=S_1+S_2j,
\]
in which $C_1,C_2,S_1,S_2$ are complex matrices. Then
\begin{equation}\label{sda}
M_1:=\mat{A&C_1\\0&B},\quad M_2:=\mat{0&C_2\\0&0}
,\quad
  N:=
\begin{bmatrix}
    A& 0\\0&B
  \end{bmatrix}
\end{equation}
are complex matrices too,
  $M:=
 \matt{A& C\\0&B}=M_1+M_2j$, and $\widehat S=S_1+\varepsilon S_2j$.

By \eqref{kli} and \eqref{tph},
\begin{equation}\label{whm}
(M_1+M_2j)(S_1+S_2j)=(S_1+\varepsilon S_2j)N.
\end{equation}
Applying to \eqref{whm} the injective homomorphism
\eqref{pmt}, we get
\begin{equation*}\label{huyy}
 \begin{bmatrix}
    M_1& M_2\\-\bar{M_2}&\bar{M_1}
  \end{bmatrix}
\begin{bmatrix}
    S_1& S_2\\-\bar{S_2}&\bar{S_1}
  \end{bmatrix}=
  \begin{bmatrix}
    S_1& \varepsilon S_2\\-\varepsilon \bar{S_2}&\bar{S_1}
  \end{bmatrix}
  \begin{bmatrix}
    N& 0\\0&\bar{N}
  \end{bmatrix}.
  \end{equation*}
Then
   \[
   J
 \begin{bmatrix}
    M_1& M_2\\-\bar{M_2}&\bar{M_1}
  \end{bmatrix}
\begin{bmatrix}
    S_1& S_2\\-\bar{S_2}&\bar{S_1}
  \end{bmatrix}= J
  \begin{bmatrix}
    S_1& \varepsilon S_2\\-\varepsilon \bar{S_2}&\bar{S_1}
  \end{bmatrix}
JJ
  \begin{bmatrix}
    N& 0\\0&\bar{N}
  \end{bmatrix}
  \]
with
\begin{equation}\label{vcy}
J:=\mat{I& 0\\0&\varepsilon I}
\end{equation}
gives
  \begin{equation*}\label{nuir}
 \begin{bmatrix}
    M_1& M_2\\-\varepsilon \bar{M_2}&\varepsilon \bar{M_1}
  \end{bmatrix}
\begin{bmatrix}
    S_1& S_2\\-\bar{S_2}&\bar{S_1}
  \end{bmatrix}=
  \begin{bmatrix}
    S_1& S_2\\-\bar{S_2}&\bar{S_1}
  \end{bmatrix}
  \begin{bmatrix}
    N& 0\\0&\varepsilon \bar{N}
  \end{bmatrix}.
  \end{equation*}
Therefore, the matrices
  \begin{equation*}
 \begin{bmatrix}
    M_1& M_2\\-\varepsilon \bar{M_2}&\varepsilon \bar{M_1}
  \end{bmatrix}\quad\text{and}\quad  \begin{bmatrix}
    N&0\\0&\varepsilon \bar{N}
  \end{bmatrix}
   \end{equation*}
are similar. We substitute \eqref{sda} obtaining
\[
\left[\begin{array}{cc|cc}
 A & C_1&0 &C_2\\
 0 & B&0&0\\\hline
 0\vphantom{\bar{\tilde{\widehat{a}}}} & -\varepsilon  \bar C_2&\varepsilon \bar A & \varepsilon  \bar C_1\\
 0 & 0&0 & \varepsilon \bar B
 \end{array}\right]\quad\text{and}\quad
\left[\begin{array}{cc|cc}
 A & 0&0 &0\\
 0 & B&0&0\\\hline
 0 & 0\vphantom{\bar{\tilde{\widehat{a}}}}&\varepsilon \bar A & 0\\
 0 & 0&0 & \varepsilon \bar B
 \end{array}\right],
 \]
then apply the similarity transformation given by
\begin{equation}\label{vnv}
 \begin{bmatrix}
    I&0&0&0\\0&0&I&0\\
    0&I&0&0\\0&0&0&I
  \end{bmatrix},
\end{equation}
and get the complex matrices
\[
\left[\begin{array}{cc|cc}
 A &0& C_1&C_2\\
 0 & \varepsilon \bar A&-\varepsilon \bar C_2&\varepsilon \bar C_1\\\hline
 0 & 0&B & 0\\
 0 & 0&0 & \varepsilon  \bar B
  \end{array}\right]\quad\text{and}\quad
  \left[\begin{array}{cc|cc} A & 0&0 &0\\
 0 & \varepsilon \bar A &0&0\\\hline
 0 & 0&B & 0\\
 0 & 0&0 & \varepsilon  \bar B
 \end{array}\right],
 \]
which are similar. By Roth's theorem \eqref{roh2},  the complex matrix equation
 \[ \begin{bmatrix}
    A& 0\\0&\varepsilon  \bar A
  \end{bmatrix}
   \begin{bmatrix}
    Z_1& Z_2\\Z_3&Z_4
  \end{bmatrix}-
 \begin{bmatrix}
    Z_1& Z_2\\Z_3&Z_4
  \end{bmatrix}
\begin{bmatrix}
    B& 0\\0&\varepsilon \bar B
  \end{bmatrix}=
   \begin{bmatrix}
    C_1& C_2\\-\varepsilon  \bar C_2&\varepsilon  \bar C_1
  \end{bmatrix}
  \]
has a solution.
Equating the (1,1) and (1,2) entries on both the sides, we find
\begin{equation}\label{,m}
 AZ_1 - Z_1B=C_1,\quad
 AZ_2 -\varepsilon  Z_2 \bar B=C_2.
\end{equation}
Interchanging these equations, taking their complex conjugates, and multiplying them by $\pm \varepsilon $, we obtain
\begin{equation}\label{owe}
-\varepsilon \bar A\bar Z_2 +\bar Z_2 B=-\varepsilon \bar C_2,\quad
 \varepsilon \bar A\bar Z_1 -\varepsilon \bar Z_1\bar B=\varepsilon \bar C_1.
\end{equation}
Write \eqref{,m} and \eqref{owe} in matrix form:
\[ \begin{bmatrix}
    A& 0\\0&\varepsilon \bar A
  \end{bmatrix}
   \begin{bmatrix}
    Z_1& Z_2\\- \bar Z_2&\bar Z_1
  \end{bmatrix}-
 \begin{bmatrix}
    Z_1& Z_2\\-\bar Z_2&\bar Z_1
  \end{bmatrix}
\begin{bmatrix}
    B& 0\\0&\varepsilon \bar B
  \end{bmatrix}=
   \begin{bmatrix}
    C_1& C_2\\-\varepsilon  \bar C_2&\varepsilon \bar C_1
  \end{bmatrix}.
\]
Then
 \begin{equation*}
 J\begin{bmatrix}
    A& 0\\0&\varepsilon \bar A
  \end{bmatrix}
   \begin{bmatrix}
    Z_1& Z_2\\- \bar Z_2&\bar Z_1
  \end{bmatrix}-J
 \begin{bmatrix}
    Z_1& Z_2\\-\bar Z_2&\bar Z_1
  \end{bmatrix}J J
\begin{bmatrix}
    B& 0\\0&\varepsilon \bar B
  \end{bmatrix}=J
   \begin{bmatrix}
    C_1& C_2\\ -\varepsilon \bar C_2&\varepsilon  \bar C_1
  \end{bmatrix}
  \end{equation*}
with $J$ defined in \eqref{vcy} gives
   \begin{equation*}
 \begin{bmatrix}
    A& 0\\0&\bar A
  \end{bmatrix}
   \begin{bmatrix}
    Z_1& Z_2\\- \bar Z_2&\bar Z_1
  \end{bmatrix}-
 \begin{bmatrix}
    Z_1& \varepsilon Z_2\\-\varepsilon \bar Z_2&\bar Z_1
  \end{bmatrix}
\begin{bmatrix}
    B& 0\\0&\bar B
  \end{bmatrix}=
   \begin{bmatrix}
    C_1& C_2\\ -\bar C_2& \bar C_1
  \end{bmatrix}.
  \end{equation*}
Due to the homomorphism \eqref{pmt},
the quaternion matrix $Z_1+Z_2j$ is a solution of $AX-\widehat XB=C$.
\medskip

\emph{Case 2: $A$ and $B$ are quaternion matrices}.
Let $X=PYQ$, where $P$ and $Q$ are some nonsingular quaternion matrices and $Y$ is a new unknown matrix.
Substituting $X=PYQ$ into
${AX-\widehat XB=C}$, we get
\[{APYQ-\widehat P\widehat Y\widehat QB=C}.\]
Multiply it by $\widehat P^{-1}$ on the left and by $Q^{-1} $ on the right:
\begin{equation}\label{plo}
\widehat P^{-1}AP\cdot Y- \widehat Y \cdot \widehat QBQ^{-1}=\widehat P^{-1}CQ^{-1}.
\end{equation}
Choose $P$ and $Q$ such that $\widehat P^{-1}AP$ and $\widehat QBQ^{-1}$ are real matrices, which is possible due to \eqref{nvn}.

By Case 1, \eqref{plo} has a solution if
\[
\mat{\widehat P^{-1}AP&\widehat P^{-1}CQ^{-1}\\0&
\widehat QBQ^{-1}}\quad\text{and}\quad
\mat{\widehat P^{-1}AP&0\\0&
\widehat QBQ^{-1}}
\]
are ${\wedge}$-similar. These matrices are ${\wedge}$-similar since they are equal to
\begin{equation*}\label{nbv}
\mat{{\widehat P}^{-1}& 0\\0&\widehat Q}
\mat{A& C\\0&B}
\mat{P& 0\\0&Q^{-1}}\quad\text{and}\quad
\mat{{\widehat P}^{-1}& 0\\0&\widehat Q}
  \mat{A& 0\\0&B}
\mat{P& 0\\0&Q^{-1}},
\end{equation*}
which are ${\wedge}$-similar to $\matt{A& B\\0&C}$ and
$\matt{A& 0\\0&B}$, which
are ${\wedge}$-similar by \eqref{tph}. Thus, \eqref{plo} has a solution, and so $AX-\widehat XB=C$ has a solution too.
\end{proof}


\section{Roth's theorem for the quaternion matrix equation $ X-A\widehat{X}B=C$}

The following theorem is the quaternion version of Wimmer's theorem \eqref{nrg}.

\begin{theorem}\label{iii2}
Let $h\mapsto \hat h$ be an involutive automorphism of the skew field of quaternions.
The quaternion matrix equation
$
X-A\widehat{X}B=C
$ has a solution if and only if
there exist nonsingular quaternion matrices $S$ and $R$ such that
\begin{equation}\label{tyu}
  \begin{bmatrix}
   A&C\\0&I
  \end{bmatrix}R
=
\widehat{S}
  \begin{bmatrix}
   A&0\\0&I
  \end{bmatrix}, \qquad
  \begin{bmatrix}
   I&0\\0&B
  \end{bmatrix}R=
  S
  \begin{bmatrix}
   I&0\\0&B
  \end{bmatrix}.
\end{equation}
\end{theorem}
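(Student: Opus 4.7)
My plan is to mirror the two-part structure of the proof of Theorem \ref{lih}. Necessity is direct: if $X$ satisfies $X = A\widehat{X}B + C$, then $S := \matt{I & \widehat{X} \\ 0 & I}$ and $R := \matt{I & \widehat{X}B \\ 0 & I}$ (so that $\widehat{S} = \matt{I & X \\ 0 & I}$) make both sides of the first identity in \eqref{tyu} equal to $\matt{A & X \\ 0 & I}$ and both sides of the second equal to $\matt{I & \widehat{X}B \\ 0 & B}$.

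For sufficiency I first reduce to the case in which $A$ and $B$ are complex matrices. By Lemma \ref{lih2} I may assume $h \mapsto \hat h$ has the form \eqref{kli}, and by \eqref{nvn} there exist nonsingular quaternion matrices $P, Q$ such that $A' := \widehat{P}^{-1} A P$ and $B' := \widehat{Q} B Q^{-1}$ are complex. The substitution $X = \widehat{P} Y Q$ turns $X - A\widehat{X}B = C$ into $Y - A' \widehat{Y} B' = C'$ with $C' := \widehat{P}^{-1} C Q^{-1}$, and a direct calculation shows that the solvability criterion \eqref{tyu} for $(A, B, C)$ with matrices $(R, S)$ passes, after conjugating $R$ and $S$ by suitable block-diagonal quaternion matrices, to the analogous criterion for $(A', B', C')$.

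Assuming now $A, B$ complex, I write $C = C_1 + C_2 j$, $R = R_1 + R_2 j$, $S = S_1 + S_2 j$ with complex $C_i, R_i, S_i$. Applying the homomorphism \eqref{pmt} to both identities in \eqref{tyu}, then the $J$-trick from the proof of Theorem \ref{lih} (with $J = \matt{I & 0 \\ 0 & \varepsilon I}$) on the $\widehat{S}$-identity, and finally the block permutation \eqref{vnv}, should bring both identities into a common block form involving $\mathcal{A} := \diag(A, \varepsilon \bar{A})$, $\mathcal{B} := \diag(B, \varepsilon \bar{B})$, and $\mathcal{C} := \matt{C_1 & C_2 \\ -\varepsilon \bar{C}_2 & \varepsilon \bar{C}_1}$. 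After one further synchronizing conjugation, the two identities should read precisely as Wimmer's hypotheses \eqref{nrg} for the complex matrix equation $Z - \mathcal{A} Z \mathcal{B} = \mathcal{C}$. Wimmer's theorem then yields a complex solution $Z$; its upper-left and upper-right blocks $Z_1, Z_2$ assemble into the quaternion matrix $X := Z_1 + Z_2 j$, whose $1$- and $j$-components under \eqref{pmt} reproduce $Z_1 - A Z_1 B = C_1$ and $Z_2 - \varepsilon A Z_2 \bar{B} = C_2$, which together express $X - A\widehat{X}B = C$.

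The main obstacle I anticipate is the \emph{simultaneous} reduction of the two identities of \eqref{tyu} to the two hypotheses of Wimmer's theorem with matching matrices. After the $J$-trick and permutation, the first identity naturally acquires $\diag(I_m, \varepsilon I_m)$ in its lower-right block instead of $I_{2m}$, while the second produces $\diag(B, \bar{B})$ instead of $\mathcal{B}$; choosing the further conjugation so that both identities land on Wimmer's form with a single common pair of complex block matrices is the delicate combinatorial step of the argument.
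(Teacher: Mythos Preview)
Your proposal is correct and follows essentially the same route as the paper: the same choices of $R$ and $S$ for necessity, the same reduction to complex $A,B$ via \eqref{nvn} and the substitution $X=PYQ$, and the same passage through the complex representation \eqref{pmt}, the $J$-trick, and a block permutation to land on Wimmer's criterion \eqref{nrg} for $Z-\mathcal A Z\mathcal B=\mathcal C$. The ``further synchronizing conjugation'' you anticipate is exactly what the paper carries out by multiplying on the left not by \eqref{vnv} but by the permutation matrix with an $\varepsilon$ in its $(4,4)$ entry, which simultaneously turns $\diag(I,\varepsilon I)$ into $I$ and $\diag(B,\bar B)$ into $\diag(B,\varepsilon\bar B)=\mathcal B$.
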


\begin{proof}
$ \Longrightarrow$. \ If $X$ is a solution of $
X-A\widehat{X}B=C
$, then \eqref{tyu} holds with $R=\matt{I&\widehat XB\\0&I}$ and $S=\matt{I&\widehat X\\0&I}$ since
\begin{equation}\label{dss}
\begin{split}
  \begin{bmatrix}
    A&X-A\widehat XB\\0&I
  \end{bmatrix}
   \begin{bmatrix}
    I&\widehat XB\\0&I
  \end{bmatrix}
             &=
    \begin{bmatrix}
    A&{X}\\0&I
  \end{bmatrix}
  =
\begin{bmatrix}
    I&X\\0&I
  \end{bmatrix}
  \begin{bmatrix}
    A&0\\0&I
  \end{bmatrix},
                        \\
  \begin{bmatrix}
    I&0\\0&B
  \end{bmatrix}
   \begin{bmatrix}
    I&\widehat XB\\0&I
  \end{bmatrix}
&=  \begin{bmatrix}
    I&\widehat XB\\0&B
  \end{bmatrix}=
\begin{bmatrix}
    I&\widehat X\\0&I
  \end{bmatrix}
  \begin{bmatrix}
    I&0\\0&B
  \end{bmatrix}.
       \end{split}
\end{equation}

\medskip

$ \Longleftarrow$. \
Due to Lemma \ref{lih2}, we suppose that the automorphism $h\mapsto\hat h$ is of the form \eqref{kli}.
Let \eqref{tyu} hold.
\medskip

\emph{Case 1: $A$ and $B$ are complex matrices}.
Write \[C=C_1+C_2j,\quad S=S_1+S_2j,\quad R = R_1+R_2j,\] where $C_1,C_2,S_1,S_2,R_1,R_2$ are complex matrices. Then
\begin{equation}\label{jke}
 M_1:=
 \begin{bmatrix}
    A& C_1\\0&I
  \end{bmatrix},\quad
 M_2:=
 \begin{bmatrix}
    0& C_2\\0&0
  \end{bmatrix},\quad
  N:=
\begin{bmatrix}
    A& 0\\0&I
  \end{bmatrix},\quad
   L:=
\begin{bmatrix}
    I& 0\\0&B
  \end{bmatrix}
 \end{equation}
are complex matrices too,
$
M:=\matt{A& C\\0&I}=M_1+M_2j
$, and $\widehat S=S_1+\varepsilon S_2j$. By \eqref{tyu},
$  MR=\widehat{S}N$ and $LR=SL.
$ Applying to them the injective homomorphism
\eqref{pmt}, we get
  \begin{align}\nonumber
 \begin{bmatrix}
    M_1& M_2\\-\bar M_2&\bar M_1
  \end{bmatrix}
\begin{bmatrix}
    R_1& R_2\\-\bar R_2 &\bar R_1
  \end{bmatrix}&=
  \begin{bmatrix}
    S_1& \varepsilon S_2\\-\varepsilon \bar S_2 &\bar S_1
  \end{bmatrix}
  \begin{bmatrix}
    N& 0\\0&\bar{N}
  \end{bmatrix},\\
   \label{fgw}
 \begin{bmatrix}
    L& 0\\0&\bar{L}
  \end{bmatrix}
\begin{bmatrix}
    R_1& R_2\\-\bar R_2&\bar R_1
  \end{bmatrix}&=
  \begin{bmatrix}
    S_1& S_2\\-\bar S_2 &\bar S_1
  \end{bmatrix}
  \begin{bmatrix}
    L& 0\\0&\bar{L}
  \end{bmatrix}.
  \end{align}

By the first equation,
   \begin{equation*}
   J
 \begin{bmatrix}
    M_1& M_2\\-\bar M_2 &\bar M_1
  \end{bmatrix}
\begin{bmatrix}
    R_1& R_2\\-\bar R_2 &\bar R_1
  \end{bmatrix}= J
  \begin{bmatrix}
    S_1& \varepsilon S_2\\-\varepsilon \bar S_2 &\bar S_1
  \end{bmatrix}
  JJ
  \begin{bmatrix}
    N& 0\\0&\bar{N}
  \end{bmatrix}
  \end{equation*}
with $J$ defined in \eqref{vcy}, which gives
  \begin{equation*}
 \begin{bmatrix}
    M_1& M_2\\-\varepsilon \bar M_2&\varepsilon \bar M_1
  \end{bmatrix}
\begin{bmatrix}
    R_1& R_2\\-\bar R_2 &\bar R_1
  \end{bmatrix}=
  \begin{bmatrix}
    S_1& S_2\\-\bar S_2 &\bar S_1
  \end{bmatrix}
  \begin{bmatrix}
    N& 0\\0&\varepsilon \bar{N}
  \end{bmatrix}.
  \end{equation*}
This equation and \eqref{fgw} ensure that the matrices in the pairs
\[
\left(
\begin{bmatrix}
    M_1& M_2\\-\varepsilon \bar{M_2}&\varepsilon \bar{M_1}
  \end{bmatrix},
\begin{bmatrix}
    L& 0\\0&\bar{L}
  \end{bmatrix}
\right)
\quad\text{and}\quad
\left(
\begin{bmatrix}
    N& 0\\0&\varepsilon \bar{N}
  \end{bmatrix},
\begin{bmatrix}
    L& 0\\0&\bar{L}
  \end{bmatrix}
\right)\]
are simultaneously equivalent. Substituting \eqref{jke}, we get
\[
\left(\left[
\begin{array}{cc|cc}
 A & C_1&0 &C_2\\
 0 & I&0&0\\\hline
 0\vphantom{\bar{\tilde{\widehat{a}}}} & -\varepsilon \bar C_2&\varepsilon \bar A & \varepsilon \bar  C_1\\
 0 & 0&0 & \varepsilon I
 \end{array}\right],\ \left[
\begin{array}{cc|cc}
 I & 0&0 &0\\
 0 & B&0&0\\\hline
 0 & 0& I& 0\\
 0 & 0&0 & \bar B
 \end{array}\right]
 \right)
 \end{equation*}
and
\[
\left(
\left[
\begin{array}{cc|cc}
 A &0&0 &0\\
 0 & I&0&0\\\hline
 0\vphantom{\bar{\tilde{\widehat{a}}}} & 0&\varepsilon \bar A & 0\\
 0 & 0&0 & \varepsilon I
 \end{array}\right],\
\left[
\begin{array}{cc|cc}
 I & 0&0 &0\\
 0 & B&0&0\\\hline
 0 & 0& I& 0\\
 0 & 0&0 & \bar B
 \end{array}\right]
 \right).
 \]
Multiplying them by
 \[
  \begin{bmatrix}
    I&0&0&0\\0&0&I&0\\
    0&I&0&0\\0&0&0&\varepsilon
  \end{bmatrix}
 \]
 on the left and by \eqref{vnv} on the right,
we obtain the pairs
\[
\left(
\left[
\begin{array}{cc|cc}
 A &0 &C_1 & C_2\\
 0 &\varepsilon \bar A &-\varepsilon \bar C_2& \varepsilon\bar C_1\\\hline
 0 & 0&I &0 \\
 0 & 0&0 & I
 \end{array}\right],\ \left[
\begin{array}{cc|cc}
 I & 0&0 &0\\
 0 & I&0&0\\\hline
 0 & 0& B& 0\\
 0 & 0&0 &\varepsilon \bar B
\end{array}\right]
 \right)
 \]
and
\[
\left(
\left[
\begin{array}{cc|cc}
 A &0&0 &0\\
 0 & \varepsilon  \bar A&0&0\\\hline
 0 & 0& I& 0\\
 0 & 0&0 & I
 \end{array}\right],\ \left[
\begin{array}{cc|cc}
 I & 0&0 &0\\
 0 & I&0&0\\\hline
 0 & 0& B& 0\\
 0 & 0&0 &\varepsilon \bar B
\end{array}\right]
 \right),
 \]
whose matrices are simultaneously equivalent.
By Wimmer's criterion \eqref{nrg}, the complex matrix equation
 \[    \begin{bmatrix}
    Z_1& Z_2\\Z_3&Z_4
  \end{bmatrix}-
  \begin{bmatrix}
    A& 0\\0&\varepsilon \bar A
  \end{bmatrix}
 \begin{bmatrix}
    Z_1& Z_2\\Z_3&Z_4
  \end{bmatrix}
\begin{bmatrix}
    B& 0\\0&\varepsilon \bar B
  \end{bmatrix}=
   \begin{bmatrix}
    C_1& C_2\\ -\varepsilon \bar C_2&\varepsilon \bar C_1
  \end{bmatrix}
  \]
has a solution.
Equating the (1,1) and (1,2) entries on both the sides, we find
\begin{equation}\label{llk}
 Z_1 - AZ_1B=C_1,\quad
 Z_2 -\varepsilon  AZ_2 \bar B=C_2.
\end{equation}
Interchanging these equations, taking their complex conjugates, and multiplying them by $\pm \varepsilon $, we obtain
\begin{equation}\label{aabb}
 -\varepsilon \bar{Z}_2 +  \bar{A}\bar{Z}_2{B}=-\varepsilon \bar{C}_2,
  \qquad
\varepsilon \bar{Z}_1 - \varepsilon \bar{A}\bar {Z}_1\bar{B}=\varepsilon \bar{C}_1.
\end{equation}
Write \eqref{llk} and \eqref{aabb} in matrix form:
  \[
   \begin{bmatrix}
    Z_1& Z_2\\-\varepsilon  \bar Z_2&\varepsilon\bar Z_1
  \end{bmatrix}-
   \begin{bmatrix}
    A& 0\\0&\varepsilon  \bar A
  \end{bmatrix}
 \begin{bmatrix}
    Z_1&Z_2\\-\varepsilon \bar Z_2&\varepsilon \bar Z_1
  \end{bmatrix}
\begin{bmatrix}
    B& 0\\0&\varepsilon \bar B
  \end{bmatrix}=
   \begin{bmatrix}
    C_1& C_2\\ -\varepsilon \bar C_2&\varepsilon  \bar C_1
  \end{bmatrix}.
    \]
Then
  \[
  J
   \begin{bmatrix}
    Z_1& Z_2\\-\varepsilon  \bar Z_2&\varepsilon\bar Z_1
  \end{bmatrix}-J
   \begin{bmatrix}
    A& 0\\0&\varepsilon  \bar A
  \end{bmatrix}
 \begin{bmatrix}
    Z_1& Z_2\\-\varepsilon \bar Z_2&\varepsilon \bar Z_1
  \end{bmatrix}JJ
\begin{bmatrix}
    B& 0\\0&\varepsilon \bar B
  \end{bmatrix}=J
   \begin{bmatrix}
    C_1& C_2\\ -\varepsilon \bar C_2&\varepsilon  \bar C_1
  \end{bmatrix}
  \]
with $J$ defined in \eqref{vcy} gives
   \[
   \begin{bmatrix}
    Z_1& Z_2\\- \bar Z_2&Z_1
  \end{bmatrix}-
   \begin{bmatrix}
    A& 0\\0&\bar A
  \end{bmatrix}
 \begin{bmatrix}
    Z_1& \varepsilon Z_2\\-\varepsilon \bar Z_2&\bar Z_1
  \end{bmatrix}
\begin{bmatrix}
    B& 0\\0&\bar B
  \end{bmatrix}=
   \begin{bmatrix}
    C_1& C_2\\ -\bar C_2& \bar C_1
  \end{bmatrix}.
  \]
Due to the homomorphism \eqref{pmt},
the quaternion matrix $Z_1+Z_2j$ is a solution of $ X-A\widehat{X}B=C$.
\medskip

\emph{Case 2: $A$ and $B$ are quaternion matrices}.
Let $X=PYQ$, where $P$ and $Q$ are some nonsingular quaternion matrices  and $Y$ is a new unknown matrix.
Substituting $X=PYQ$ into
${X-A\widehat{X}B=C}$, we get
\[{P}{Y}{Q}- A\widehat P\widehat Y\widehat QB=C.\]
Multiply it by $P^{-1}$ on the left and by $Q^{-1} $ on the right:
\begin{equation}\label{mju}
{Y}-P^{-1}A\widehat P \cdot\widehat Y \cdot \widehat QB{ Q}^{-1}
= P^{-1}C Q^{-1}.
\end{equation}
Choose $P$ and $Q$ such that $ P^{-1}A\widehat P$ and $\widehat QBQ^{-1}$ are complex matrices, which is possible due to \eqref{nvn}.

By Case 1, \eqref{mju} has a solution if
\begin{equation}\label{vfi}
\begin{split}
  \mat{P^{-1}A\widehat P&P^{-1}CQ^{-1}\\0&I}R'
&= \widehat S'\mat{P^{-1}A\widehat P& 0\\0&I}
              \\
\mat{I& 0\\0&\widehat QBQ^{-1}}R'&=
S'\mat{I& 0\\0&\widehat QBQ^{-1}}
\end{split}
\end{equation}
for some nonsingular quaternion matrices $R'$ and $S'$. Write these equations
in the form:
\begin{align*}
\mat{P^{-1}& 0\\0&Q}
\mat{A& C\\0&I}
\mat{\widehat P& 0\\0&Q^{-1}}R'
   & =\widehat S'
\mat{P^{-1}& 0\\0&Q}
\mat{A& 0\\0&I}
\mat{\widehat P& 0\\0& Q^{-1}},
           \\
\mat{\widehat P^{-1}& 0\\0&\widehat Q}
\mat{I& 0\\0&B}
\mat{\widehat P& 0\\0&Q^{-1}}
 R' &=
S'\mat{\widehat P^{-1}& 0\\0&\widehat Q}
\mat{I& 0\\0&B}
\mat{\widehat P& 0\\0&Q^{-1}}.
\end{align*}
Setting
\begin{equation}\label{las}
R:=\mat{\widehat P& 0\\0&Q^{-1}}
 R'\mat{\widehat P^{-1}& 0\\0&Q},\quad
S:=\mat{\widehat P& 0\\0&\widehat Q^{-1}}
S'\mat{\widehat P^{-1}& 0\\0&\widehat Q},
\end{equation}
we get \eqref{tyu}.

Thus, if the condition \eqref{tyu} holds for some matrices $R$ and $S$, then we can define $R'$ and $S'$ from \eqref{las} and obtain the equalities \eqref{vfi}. They ensure the solvability of \eqref{mju},  and so the solvability of $X-A\widehat XB=C$.
\end{proof}

\section*{Acknowledgements}
V. Futorny is
supported in part by  CNPq grant (301320/2013-6) and by
FAPESP grant (2014/09310-5).
This work was done during the visit of V.V.~Sergeichuk to the University of S\~ao Paulo;
he is grateful to the university for hospitality and the FAPESP
for financial support (grant 2015/05864-9).
The authors wish to express their gratitude to the referee for his comments and suggestions.

\end{document}